\newtheorem{thm}{Theorem}[section]
\newtheorem{dfn}{Definition}[section]
\newtheorem{exm}{Example}[section]
\newtheorem{crl}{Corollary}[section]
\newtheorem{prp}{Proposition}[section]
\newtheorem{cnj}{Conjecture}[section]
\newcommand{\gen}[1]{\left< #1 \right>}
\newcommand{\Gen}{\text{Gen}}
\newcommand{\scr}[1]{\mathscr{#1}}
\newcommand{\ch}{\text{ch}}
\newcommand{\lu}{\text{lu}}
\newcommand{\lk}{\text{link}}
\author{Cole Hugelmeyer} 
\title{A Construction on Operads Applicable to Homology Operations and the Minimal Model} 
\begin{document} 
\maketitle 

\begin{abstract}
We define a construction on operads which yields a new description of the minimal model. The construction also allows us to define algebraic structures on the homology of chain complexes with homologously trivial operad algebra structures, thus exposing nontrivial structure where none is apparent.
\end{abstract}

{\bf Keywords:} operad, minimal model, homotopy algebra

\section{Introduction and Notation}
$\;\;\;\;\;\;\;$In this paper we define a construction on symmetric operads which attaches a graded operad to a differential graded operad by universally letting the generators of the first operad differentiate to elements of the second. We call the resulting operad a universal linking operad, and we offer a combinatorial description of its presentation and homology. This construction has many surprising properties. For instance, given a chain complex with a homologously trivial operad algebra structure, we are able to construct nontrivial algebra structures over the homology of a universal linking operad on the homology of the complex. More surprisingly, the minimal model of any operad can be constructed by an infinite iteration of the universal linking construction, where the free operad on the lowest arity homology of the previous iteration is linked on at each step.  The notion of the minimal model of an operad arises naturally from attempts to define algebraic structures up to homotopy \cite{vogt}\cite{berger}\cite{kapranov} and finds applications in string theory \cite{markl}.  The construction in this paper is therefore highly applicable to these areas because it offers a new perspective on the combinatorial nature of the minimal model.

In this paper all operads, unless otherwise idicated, are assumed to be graded.  By $\gen{G}$ we denote the free operad on a $\Sigma$-module $G$. By $\gen{G|R}$ we mean the operad cokernel of the map $\gen{R}\to \gen{G}$, where it is assumed that $R$ has a map to the underlying $\Sigma$-moodule of $\gen{G}$.  Let $U$ be the forgetful functor from operads to $\Sigma$-modules, and let $\varepsilon$ and $\eta$ be the unit and counit of adjunction between operads and $\Sigma$-modules respectively.   Let $\text{Gen}(\scr{O})$ denote the minimal generating $\Sigma$-module of an operad $\scr{O}$.  Lower case $s$ will denote suspension.  Generally, the structural maps for operads will be shortened to $a(\otimes_{i = 1}^n b_i)$ to mean element $a$ of arity $n$ composed with all $b_i$.  All chain complexes are assumed to be only nontrivial in nonnegative degree.  We will often refer to the tree basis of a free operad.  See \cite{grobner} \cite{kapranov}.  When we do this, assume we have chosen a $\Sigma$-invaritant basis of the generating $\Sigma$-module of the operad, and the basis elements of the operad are composition trees with nodes decorated by these specified basis elements.

\section{Chain Complex Adjunction}

Let $\scr{O}$ be an operad in the category of graded vector spaces over a field $\mathbb{K}$.  Let $\iota$ and $\partial$ denote maps $\scr{O}\to\gen {U(\scr{O}) \oplus s^{-1}U(\scr{O})}$ given by the identification of $\mathscr{O}$ with the first term of the direct sum, and second term of the direct sum respectively.  We define $\ch \scr{O} = \gen {U(\scr{O}) \oplus s^{-1}U(\scr{O})}/R$ where $R$ is the ideal generated by the relations below.   \begin{align}\partial(a(\otimes_{i = 1}^n b_i)) = \partial a(\otimes_{i = 1}^n \iota b_i ) + \sum_{i = 1}^n(-1)^{|a| + \sum_{j = 1}^{i-1}|b_j|} (\iota a(\otimes_{j = 1}^n\varphi_{ij}b_j)) \\
\iota(a(\otimes_{i = 1}^n b_i)) = \iota a(\otimes_{i = 1}^n \iota b_i) \end{align}
where $\varphi_{ij}$ is $\partial$ when $i = j$ and $\iota$ otherwise. 
\begin{prp}
The functor \emph{ch} is a left adjoint to the forgetful functor, $V$, from chain complex operads to graded vector space operads.  
\end{prp}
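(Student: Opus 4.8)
The plan is to establish a natural bijection $\mathrm{Hom}(\ch\scr O,\scr P)\cong\mathrm{Hom}(\scr O,V\scr P)$, where the left-hand side is taken in chain complex operads and the right-hand side in graded operads. First I would pin down the chain complex structure that makes $\ch\scr O$ an object of the target category: equip the free operad $\gen{U(\scr O)\oplus s^{-1}U(\scr O)}$ with the unique derivation $d$ of degree $-1$ determined on generators by $d(\iota a)=\partial a$ and $d(\partial a)=0$. Squaring to zero is immediate on generators and propagates by the derivation property. The crucial point is that $d$ descends to the quotient $\ch\scr O$: applying $d$ to the generator of $R$ coming from relation (2) produces exactly the generator coming from relation (1), and applying $d$ to the latter gives $d^2$ of the former, hence $0$. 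Since $d$ is a derivation, it therefore carries the whole ideal $R$ into $R$, so $(\ch\scr O,d)$ is a chain complex operad and $\ch$ is a functor. This also shows the signs $(-1)^{|a|+\sum_{j<i}|b_j|}$ in relation (1) are precisely the Koszul signs in the Leibniz rule for $d$.

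The unit of the proposed adjunction is $\iota\colon\scr O\to V\ch\scr O$; relation (2) says exactly that $\iota$ is multiplicative, so it is a morphism of graded operads. Given any morphism $f\colon\scr O\to V\scr P$ of graded operads, I would produce the adjunct $\tilde f\colon\ch\scr O\to\scr P$ as follows. Because $\tilde f$ must commute with differentials, its values are forced on generators: $\tilde f(\iota a)=f(a)$ and $\tilde f(\partial a)=d_{\scr P}f(a)$, the degrees matching since $\partial a$ and $d_{\scr P}f(a)$ both sit in degree $|a|-1$. The universal property of the free operad extends this to a graded operad morphism out of $\gen{U(\scr O)\oplus s^{-1}U(\scr O)}$, and I would then check it annihilates $R$: the relation (2) generator is killed because $f$ is multiplicative, and the relation (1) generator is killed because the differential $d_{\scr P}$ of the dg operad $\scr P$ obeys the Leibniz rule with the same Koszul signs matched in the first paragraph. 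Hence $\tilde f$ descends to $\ch\scr O$, and it is a chain map because $\tilde f\circ d$ and $d_{\scr P}\circ\tilde f$ are computed to agree on generators and both respect operadic composition.

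Finally I would verify that $f\mapsto\tilde f$ and $g\mapsto V(g)\circ\iota$ are mutually inverse and natural. One direction is $V\tilde f\circ\iota=f$, which holds by evaluation on $\iota a$. For the other, given a chain complex operad morphism $g\colon\ch\scr O\to\scr P$ and setting $f=V(g)\circ\iota$, the adjunct $\tilde f$ agrees with $g$ on $\iota a$ by definition and on $\partial a$ because $g$ is a chain map, so $\tilde f(\partial a)=d_{\scr P}f(a)=d_{\scr P}g(\iota a)=g(d\,\iota a)=g(\partial a)$; as both are operad morphisms agreeing on a generating set they coincide. Naturality in $\scr O$ and $\scr P$ is routine from the construction on generators. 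I expect the main obstacle to be bookkeeping rather than conceptual: the entire argument hinges on matching the explicit Koszul signs in relation (1) with the signs dictated by the Leibniz rule in an arbitrary target dg operad, first to make $d$ well defined on $\ch\scr O$ and again to make $\tilde f$ descend past relation (1). Once the sign conventions are fixed consistently, the freeness of $\gen{-}$ and the derivation property reduce every verification to an identity on generators.
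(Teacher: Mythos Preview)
Your proposal is correct and follows essentially the same approach as the paper: both arguments use the universal property of the free operad on $U(\scr O)\oplus s^{-1}U(\scr O)$ and then observe that relation~(2) is killed by multiplicativity of the target map while relation~(1) is killed by the Leibniz rule for the target differential. The paper packages this as a comparison of two injective maps into $\hom(\gen{U(\scr O)\oplus s^{-1}U(\scr O)},\scr Q)$ with equal image, whereas you spell out the unit and the explicit inverse correspondence; you also supply a step the paper omits, namely the verification that the differential actually descends to $\ch\scr O$ and squares to zero.
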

\begin{proof}
Given a graded vector space operad $\scr{P}$ and a chain copmplex operad $\scr{Q}$, there is a natural map $$a:\hom(\scr{P},V\scr{Q})\to \hom(\gen {U(\scr{P}} \oplus s^{-1}U(\scr{P})),\scr{Q})$$ given by the differential on $\scr{Q}$ and the universal property for the free operad functor.  There is also a natural map $$b: \hom(\ch(\scr{P}), \scr{Q}) \to \hom(\gen {U(\scr{P}) \oplus s^{-1}U(\scr{P})},\scr{Q})$$ given by composition with the canonical projection. Due to the chain homomorphicity conditions for the differential, and the operad homomorphicity for the inclusion, $R$ is mapped trivially in the image of $a$, which means the image $a$ is the same as the image of $b$, and since both $a$ and $b$ are injective, this means that there is a natural bijection $b^*a:\hom(\scr{P},V\scr{Q})\to \hom(\ch \scr{P}, \scr{Q})$ given by lifting $a$ up $b$.  This yields an adjunction.  
\end{proof}

\begin{prp}
Let $\scr{O} = \gen{G|R}$ be generated by generators \emph{$G $} and relations \emph{$R\subseteq \gen{G}$}.  Then \emph{$\ch(\scr{O}) = \gen{G\oplus s^{-1}G| R\oplus s^{-1}R}$}.  
\end{prp}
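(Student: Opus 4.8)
The plan is to exploit the fact, established in Proposition 2.1, that $\ch$ is a left adjoint and hence preserves all colimits. Since the presentation $\scr{O} = \gen{G|R}$ exhibits $\scr{O}$ as a colimit, I only need to understand what $\ch$ does to \emph{free} operads and to the structural maps assembling the presentation, and then transport the colimit through $\ch$.

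First I would settle the free case $R = 0$, i.e.\ show $\ch\gen{G} \cong \gen{G \oplus s^{-1}G}$ with differential determined by $\iota g \mapsto \partial g$. Because $\ch \dashv V$ and $\gen{-} \dashv U$, the composite $\ch\gen{-}$ is left adjoint to $UV$, the functor sending a chain complex operad to its underlying graded $\Sigma$-module. I would then check directly that the assignment $M \mapsto \gen{M \oplus s^{-1}M}$, equipped with the derivation determined by $\iota m \mapsto \partial m$ and $\partial m \mapsto 0$, is \emph{also} left adjoint to $UV$: a chain operad morphism from it to $\scr{Q}$ is a graded operad morphism $\gen{M \oplus s^{-1}M} \to V\scr{Q}$ commuting with the differential, and the commutation condition $\phi(\partial m) = d_{\scr{Q}}\phi(\iota m)$ forces the values on $s^{-1}M$ from those on $M$, so the data reduce to a single $\Sigma$-map $M \to UV\scr{Q}$. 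By uniqueness of left adjoints, $\ch\gen{M} \cong \gen{M \oplus s^{-1}M}$ naturally, with matching differential.

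Next I would present $\scr{O}$ as a coequalizer. Writing $\phi_1 : \gen{R} \to \gen{G}$ for the morphism extending $R \hookrightarrow \gen{G}$ and $\phi_0 : \gen{R} \to \gen{G}$ for the morphism that is zero on generators (the composite $\gen{R} \to I \to \gen{G}$ through the initial operad $I$), the operad $\gen{G|R}$ is exactly the coequalizer of $\phi_0$ and $\phi_1$, since a morphism out of $\gen{G}$ coequalizes them iff it sends every $r \in R$ to $0$, iff it kills the ideal generated by $R$. Applying the colimit-preserving functor $\ch$ together with the free-case computation turns this into a coequalizer
\begin{equation*}
\gen{R \oplus s^{-1}R} \;\rightrightarrows\; \gen{G \oplus s^{-1}G} \longrightarrow \ch(\scr{O}),
\end{equation*}
with the two maps $\ch\phi_0, \ch\phi_1$. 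By functoriality of $\ch$ and the free-case description, $\ch\phi_1$ sends $\iota r \mapsto \iota r$ and $\partial r \mapsto d(\iota r)$, while $\ch\phi_0$ sends both $\iota r$ and $\partial r$ to $0$. Hence $\ch(\scr{O})$ is $\gen{G \oplus s^{-1}G}$ modulo the ideal generated by $\iota r$ and $d(\iota r)$ for $r \in R$; since $d(\iota r)$ is precisely the element denoted $s^{-1}r$ and $d(s^{-1}r) = 0$, this ideal coincides with the plain ideal generated by $R \oplus s^{-1}R$, yielding $\ch(\scr{O}) = \gen{G \oplus s^{-1}G \mid R \oplus s^{-1}R}$.

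I expect the main obstacle to be bookkeeping rather than conceptual: pinning down the precise meaning of the symbol $s^{-1}R$ as the image of the relations under the universal derivation, and verifying that $\ch$ applied to the structural morphisms $\phi_0,\phi_1$ produces exactly the generators $\iota r$ and $d(\iota r)$. This is the step where the Leibniz signs in relation $(1)$ must be shown to agree with the derivation used to build the differential on $\ch\gen{G}$ in the free-case step. A secondary point to verify is that the coequalizer genuinely computes the quotient by the operadic ideal — that $\phi_0$ is a legitimate operad morphism and that coequalizing against it amounts to killing $R$ — which holds because $\phi_0$ factors through the initial operad.
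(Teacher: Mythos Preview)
Your proof is correct and takes a genuinely different, more categorical route than the paper. The paper argues directly: relations (1) and (2) force every element of $\ch(\scr{O})$ to factor through $G \oplus s^{-1}G$, so this $\Sigma$-module generates; differentiating each relation $r \in R$ via the Leibniz rule produces a new relation, so $R \oplus s^{-1}R$ must hold; and then the adjunction from Proposition~2.1 is invoked in one sentence to conclude there are no further relations. You instead leverage colimit preservation systematically: first identifying $\ch\gen{-}$ with $\gen{(-) \oplus s^{-1}(-)}$ via uniqueness of left adjoints to $UV$, then transporting the coequalizer presentation $\gen{R} \rightrightarrows \gen{G} \to \scr{O}$ through $\ch$. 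Your argument makes the ``no further relations'' step automatic rather than a separate appeal to universality, and it cleanly isolates the free case as the only place where the Leibniz signs in relation~(1) need to be checked; the paper's argument is shorter and keeps the defining relations (1) and (2) visibly in play. Both routes are sound and arrive at the same presentation.
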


\begin{proof}
Relations (1) and (2) imply that any element of $\ch(\scr{O})$ can be factored into a composition of elements in $G\oplus s^{-1}G\to \ch{\scr{O}}$.  By differentiating the relations for $G$, we see that $\ch(\scr{O})$ must at least have the relations $R\oplus s^{-1}R$, but from the universal property of the adjunction, we see that these are the only relations.  
\end{proof}

\begin{prp}
For any operad $\scr{O}$, there is an isomorphism  \emph{$$\theta : U\ch(\scr{O})\to U\scr{O}\circ \eta \,s^{-1}\,U\scr{O}$$} with the differential given by $\partial(1\otimes a) = 0$ and when \emph{$\text{arity}(a) = n >1$},  $$\partial(a\otimes (\otimes_{i = 1}^n b_i)) = a(\otimes_{i = 1}^n b_i).$$
\end{prp}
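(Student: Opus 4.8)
The plan is to produce $\theta$ from an explicit normal form for the tree basis of $\ch(\scr{O})$ and then read the differential off that normal form. First I would pin down the differential on $\ch(\scr{O})$: since $\ch(\scr{O})$ is an operad in chain complexes, its differential $d$ is the derivation determined by $d(\iota a)=\partial a$ and $d(\partial a)=0$. Relation (2) says $\iota$ is a morphism, and applying $d$ to relation (2) reproduces relation (1) exactly, since the Leibniz expansion of $d(\iota a(\otimes_{i}\iota b_i))$ is precisely the right-hand side of (1); thus the two relations are nothing more than the assertion that $d$ is a well defined derivation, and no extra differential data is needed. Interpreting $\eta\,s^{-1}U\scr{O}$ as the underlying $\Sigma$-module $U\gen{s^{-1}U\scr{O}}$ of the free operad on $s^{-1}U\scr{O}$, the target $U\scr{O}\circ\eta\,s^{-1}U\scr{O}$ is the $\Sigma$-module of two-layer composites: one outer operation $a\in U\scr{O}$ with a (possibly trivial) tree of desuspended operations plugged into each input. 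The differential formula, which moves the outer operation down to become a new root of the inner tree, is what forces this reading.

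Next I would establish the matching normal form inside $\ch(\scr{O})$, working in the tree basis of $\gen{U\scr{O}\oplus s^{-1}U\scr{O}}$ (equivalently, by Proposition 2.2, of $\gen{G\oplus s^{-1}G}$ with nodes decorated by $\iota$- and $\partial$-generators). The claim is that every element reduces to a combination of trees having a single $\iota$-node at the root, decorated by an element of $\scr{O}$, with every node below it a $\partial$-node. Relation (2) collapses any connected cluster of $\iota$-nodes into one $\iota$-node, so in reduced form no $\iota$-node has an $\iota$-child. Relation (1), read as $\partial a(\otimes_i\iota b_i)=\partial(a(\otimes_i b_i))-\sum_i\pm\,\iota a(\otimes_j\varphi_{ij}b_j)$, lets me rewrite any $\partial$-node still carrying $\iota$-children: it is traded for a single $\partial$-node decorated by a composite, which has no $\iota$-children, plus terms in which the node has become an $\iota$-node and is absorbed upward by (2). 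I would show this rewriting terminates by inducting on the number of $\iota$-nodes lying strictly below a $\partial$-node. Collapsing the root $\iota$-cluster to an outer operation $a\in U\scr{O}$ and recording the $\partial$-trees hanging from its inputs, a bare input being the operad unit, then defines the bijection underlying $\theta$, sending $\iota a(\otimes_i\tau_i)$ to $a\otimes(\otimes_i\tau_i)$ with $\tau_i\in U\gen{s^{-1}U\scr{O}}$.

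The main obstacle is linear independence: I must rule out hidden relations among distinct normal forms, that is, injectivity of $\theta$. I would secure this either by a diamond-lemma argument, choosing a monomial order on the tree basis for which the reduced trees above are exactly the normal monomials and checking that every overlap ambiguity between applications of (1) and (2) resolves, or, more cleanly, by constructing the inverse directly: the assignment $a\otimes(\otimes_i\tau_i)\mapsto\iota a(\otimes_i\tau_i)$ is well defined out of the free composite by the universal properties, and the reduction above shows the two composites are mutually inverse. Finally I would transport $d$: on a normal form $\iota a(\otimes_i\tau_i)$ the Leibniz rule gives $d=\partial a(\otimes_i\tau_i)$, because each $\tau_i$ is a tree of $\partial$-nodes and hence a $d$-cycle, so the result is a single $\partial$-tree with new root $\partial a$, i.e.\ the outer-unit element $1\otimes s^{-1}a(\otimes_i\tau_i)$. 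This is exactly the stated rule $\partial(a\otimes(\otimes_i b_i))=a(\otimes_i b_i)$, while a pure $\partial$-tree $1\otimes a$ is a cycle, giving $\partial(1\otimes a)=0$; the degree bookkeeping is then routine.
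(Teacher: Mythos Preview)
Your approach is essentially the same as the paper's: both reduce tree-basis elements of $\ch(\scr{O})$ (with $G=U\scr{O}$) to the normal form ``one $\iota$-node at the root, all other nodes $\partial$-decorated'' by iterated use of relations (1) and (2), and both then read off the differential from the Leibniz expansion. The only difference is that you are more explicit about injectivity (offering a diamond-lemma argument or a direct inverse), whereas the paper dispatches this in one sentence by observing that the kernel of the rewriting map is exactly the ideal generated by the composition relations and their derivatives.
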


\begin{proof}
Let $\scr{O} = \gen{G|R}$ as in Proposition 2.2, with $G = U\scr{O}$.  We may then use the derivatives of the composition relations in $G$ to express any composition tree in $\gen{G\oplus s^{-1}G}$ uniquely as a sum of trees which have only one element of $G$ at the root, and all other nodes in $s^{-1}G$.  To see this, note that whenever an element of $G$ appears further along the tree than an element of $s^{-1}G$, an application of the derivative of a composition relation will give a signed sum of a tree with the differential switched to the farther node and a tree where the two nodes have been combined into something in $s^{-1}G$.  Repeating this process will yield the desired result.  We have therefore constructed an epimorphism $\phi: \eta(G\oplus s^{-1}G) \to G \circ \eta s^{-1} G = G\circ \eta \, s^{-1}\,U \scr{O}$.  Which extends down to a map $\theta: U \ch(\scr{O})\to U\scr{O}\circ \eta s^{-1}U\scr{O}$.  We now see that the differential is the one we want by applying relation (1) to the trees in our construction.  To determine that $\theta$ is an isomorphism, we observe that, by construction, the kernel of $\phi$ is the ideal generated by the composition relations and their derivatives.  
\end{proof}

\begin{crl}
For all operads $\scr{O}$, we have $H_*\ch\scr{O} = 0$.
\end{crl}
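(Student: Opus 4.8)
The plan is to deduce the vanishing of homology directly from the explicit model for $U\ch\scr{O}$ furnished by Proposition 2.3, by exhibiting a contracting homotopy. Via the isomorphism $\theta$ it suffices to prove that the complex $U\scr{O}\circ\eta\, s^{-1}U\scr{O}$, equipped with the stated differential, is acyclic. For degree reasons the formula $\partial(a\otimes(\otimes_{i=1}^n b_i))=a(\otimes_{i=1}^n b_i)$ must be read as demoting the single root decoration $a\in U\scr{O}$ to a node $s^{-1}a$ sitting atop the subtrees $b_i$; indeed, applying the Leibniz relation (1) to $\iota a(\otimes_{i=1}^n b_i)$ and using that trees of desuspended generators are cycles gives $(\partial a)(\otimes_{i=1}^n b_i)=(s^{-1}a)(\otimes_{i=1}^n b_i)$, which in $\theta$-coordinates lies in the unit summand $1\otimes(-)$. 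The key structural observation is thus that the differential acts only at the root, transferring the unique $U\scr{O}$-decoration into the free operad $\eta\, s^{-1}U\scr{O}$, which carries no internal differential of its own.

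First I would work in the tree basis, choosing a $\Sigma$-invariant basis of $U\scr{O}$ so that every basis element of $U\scr{O}\circ\eta\, s^{-1}U\scr{O}$ is a composition tree whose root is decorated by a basis element of $U\scr{O}$ and all of whose remaining nodes are decorated by basis elements of $s^{-1}U\scr{O}$. I would then split the complex, as a graded vector space, according to whether the root decoration is the unit or a genuine generator. On the summand where the root slot is a genuine generator the homotopy $h$ is defined to be zero; on the complementary summand, consisting of elements $1\otimes T$ with $T$ a nontrivial tree, I would define $h$ by promotion: if $T=(s^{-1}c)(\otimes_{i=1}^n b_i)$ has root node $s^{-1}c$ with immediate subtrees $b_i$, set $h(1\otimes T)=c\otimes(\otimes_{i=1}^n b_i)$, lifting the desuspended root node back to a genuine root decoration $c\in U\scr{O}$.

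The heart of the argument is the verification that $\partial h+h\partial=\mathrm{id}$. On an element $a\otimes(\otimes_i b_i)$ with genuine root decoration one has $h\partial(a\otimes(\otimes_i b_i))=h(1\otimes(s^{-1}a)(\otimes_i b_i))=a\otimes(\otimes_i b_i)$ while $\partial h=0$; on an element $1\otimes T$ with $T$ nontrivial one has $\partial h(1\otimes T)=\partial(c\otimes(\otimes_i b_i))=1\otimes T$ while $h\partial=0$. In each case promotion and demotion are mutually inverse at the root, and because both operations act only on the root node while the subtrees $b_i$ are carried along unchanged, I expect the Koszul and suspension signs to cancel and the first term of the Leibniz rule to contribute with sign $+1$. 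Confirming precisely this sign behaviour, together with checking that $h$ is $\Sigma$-equivariant and independent of the chosen basis, is the main piece of bookkeeping, and is where I expect the real work to lie rather than in any conceptual difficulty.

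Finally I would note that the only element untouched by this homotopy is the operadic unit $1\otimes\mathrm{id}$, which is a nonbounding cycle; the homotopy therefore contracts the whole complex onto its unit, and in the reduced (augmentation-ideal) convention for operad homology in force here this is exactly the asserted vanishing $H_*\ch\scr{O}=0$. The main obstacle is thus not the existence of the homotopy, which is transparent once Proposition 2.3 is in hand, but the sign and equivariance accounting needed to promote the root-extraction map to a genuine chain homotopy on the composition product.
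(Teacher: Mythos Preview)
Your proposal is correct and follows essentially the same route as the paper: the paper's proof is the single sentence ``The chain complex described in Proposition 2.3 is acyclic,'' and your argument simply makes that acyclicity explicit by writing down the root--promotion/demotion contracting homotopy. Your reading of the differential (demoting the root decoration $a$ to $s^{-1}a$ atop the subtrees, with the unit moving to the root slot) is the intended one, and the homotopy you describe is exactly the standard extra-degeneracy witnessing acyclicity of a cone-type complex, so modulo the sign bookkeeping you flag there is nothing further to add.
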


\begin{proof}
The chain complex described in Proposition 2.3 is acyclic.  
\end{proof}

\section{Universal Linking Operads}

Let $\scr{P}$ and $\scr{Q}$ be operads $\gen{G|R}$ and $\gen{G'|R'}$ respectively, where $\scr{Q}$ is equipped with a differential.  Suppose we are given a map $\varphi: s^{-1}G\to U\gen{G'}$. Then there are maps $$ \scr{Q} \leftarrow \ch\gen{s^{-1}G}\to  \ch(\scr{P}).$$  We denote the chain operad pushforward of these maps by $\lu_{\varphi}(\scr P, \scr Q)$ or just $\lu(\scr P, \scr Q)$ when $\varphi$ is clear, and call it the \emph{universal linking operad}.  If $Q$ is not a differential operad, assume it is equipped with the trivial differential.  Provided $G\to\scr{P}$ is injective and $\varphi$ injects into $\ker\partial$, there are injections $\scr P \to \lu(\scr P, \scr Q)$ and $\scr Q\to \lu(\scr P, \scr Q)$.  We will use the notation $\lu_G(\scr{O}) = \lu(\gen{s\,G},\scr O)$ where $G$ is a generating $\Sigma$-module for the operad $\scr{O}$.  This is functorial in the category of operads paired with generating modules, where morphisms must take one generating module into another.  We simplify our notation to $\lu(\scr{O}) = \lu_\Gen(\scr{O})$ when there is an obvious choice of $\varphi$, such as when the generators are concentrated in a single arity.  However keep in mind that $\Gen$ is not functorial.  One should also note that the universal linking functor preserves coproducts because it is built out of a left adjoint and a colimit.  

The name universal linking operad has been chosen because any chain complex operad $\scr{O}$ which contains both $\scr{P}$ and $\scr{Q}$ and for which the generators of $\scr{P}$ differentiate to proper elements of $\scr{Q}$, (thus we think of $\scr{O}$ as linking $\scr{P}$ to $\scr{Q}$) there is a unique map (given by universal properties for ch and coker) $\lu(\scr{P},\scr{Q})\to \scr{O}$.  In other words, the universal linking operad is universal among operads which link $\scr{P}$ to $\scr{Q}$.  

\subsection{Homology Computations}
It is not hard to find examples of universal linking operads with nontrivial homology.  For instance, $\lu(\scr{A}ss)$ has nontrivial homology class given by $$\rho = s\mu (\mu\otimes 1 - 1\otimes \mu)$$ where $\mu$ is the generator for $\scr{A}ss$.  We will see that the homology of $\lu(\scr{A}ss)$ is free on this generator, and we will generalize the fact that this generator is the suspension of the associator.

In this section, we give explicit combinatorial descriptions of the homology of universal linking operads.  To begin, suppose $\scr{O}$ is an operad with generating $\Sigma$-module $G$.  Let $R$ denote the $\Sigma$-module suspension of $\ker\partial \cap\ker p$ where $p: \ch\gen{sG}\to \lu_G\scr{O}$ and $\partial$ is the chainification differential, and let $\partial R$ denote the desuspension of $R$.  Suppose we choose a chain map $\iota: R\oplus\partial R \to \ch\gen{sG}$ which restricts to be the kernel of $p$.  Furthermore, we consider $\partial R \oplus G$ a generating module for $\gen{G}$ by setting $\partial R$ to zero.

We will now define an operad which will turn out to be isomorphic to the homology of $\lu_G(\scr{O})$ Let $a$ be the inclusion of $\ker p$ into $\ch\gen{sG}$ and let $b$ be the inclusion of $\ker\partial \subseteq \ch\gen{sG}$ into $\ch\gen{sG}$.  Let $f$ be the map $b+c: \ker p \oplus\ker\partial \to \ch\gen{sG}$, and let $g$ be the map $\gen{R}\to \ch\gen{G}$ induced by $\iota$.  Finally, we define $\scr{R}_G(\scr{O}) = \text{coker}(g^*f)$.

\begin{prp}
For all operads $\scr{O}$ generated by $G$, there is an isomorphism \emph{$$H_*(\lu_G \scr{O})\to \scr{R}_G(\scr{O}).$$} 
\end{prp}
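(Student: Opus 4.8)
The plan is to extract the homology of $\lu_G\scr{O}$ from a short exact sequence of chain complexes together with the acyclicity established in Corollary 2.4. The map $p:\ch\gen{sG}\to\lu_G\scr{O}$ is a surjective chain map: its image contains the generators of $\scr{O}$ (these arise as the desuspended copy of $sG$) as well as the adjoined suspended generators $sG$ themselves, so it hits a generating set. Hence there is a short exact sequence
$$0\to\ker p\to\ch\gen{sG}\xrightarrow{\;p\;}\lu_G\scr{O}\to 0.$$
Corollary 2.4 gives $H_*\ch\gen{sG}=0$, so the long exact sequence collapses and the connecting homomorphism is an isomorphism $\delta:H_n(\lu_G\scr{O})\xrightarrow{\cong}H_{n-1}(\ker p)$ for every $n$. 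Concretely $\delta$ sends a class to the differential of any $p$-lift of a representing cycle; the computation in Section 3.1 is exactly this, with the lift $\rho$ differentiating to the desuspended associator, a cycle lying in $\ker p$. This reduces the problem to computing $H_{*-1}(\ker p)$ and matching it with $\scr{R}_G(\scr{O})$.

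Next I would compute $H_*(\ker p)$ directly. Its cycles are $\ker\partial\cap\ker p$, which by definition is the desuspension $\partial R$ of the generating module $R$, and its boundaries are $\partial(\ker p)$; thus $H_*(\ker p)=\partial R/\partial(\ker p)$ as a $\Sigma$-module. The chain map $\iota:R\oplus\partial R\to\ch\gen{sG}$ serves to present these cycles: it identifies $\partial R$ with $\ker\partial\cap\ker p$ and exhibits $R$ as a choice of bounding chains one degree up, so that after the degree shift of $\delta$ the suspension $R$ is precisely the module of prospective homology generators of $\lu_G\scr{O}$. The operad $\scr{R}_G(\scr{O})=\coker(g^*f)$ is designed so that $\gen{R}$ supplies these generators freely, while the lift $g^*f$ — the factorization through the embedding $g$ induced by $\iota$ of the assembled inclusions $f$ of $\ker p$ and $\ker\partial$, available because $g$ is injective with image containing that of $f$ — records exactly the two kinds of relations one must impose: the operadic dependencies among the suspended cycles and the suspended boundaries $\partial(\ker p)$. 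Passing to the ideal generated by $\im(g^*f)$ should therefore realize $\partial R/\partial(\ker p)$, identifying $H_{n-1}(\ker p)$ with $\scr{R}_G(\scr{O})$ at the level of $\Sigma$-modules, so that $\delta$ composed with this identification gives the claimed map $H_*(\lu_G\scr{O})\to\scr{R}_G(\scr{O})$.

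It then remains to promote this $\Sigma$-module isomorphism to an operad isomorphism, i.e. to verify that $\delta$ followed by the identification with $\coker(g^*f)$ respects operadic composition. Here relation (1) does the essential work: for a composite $a(\otimes_{i=1}^n b_i)$ of suspended cycles, the Leibniz-type formula for $\partial$ shows, up to the Koszul sign appearing in (1), that differentiating a lift of a composite equals the composite of the differentiated lifts. Consequently $\delta$ intertwines composition in $\lu_G\scr{O}$ with composition of the corresponding cycles in $\ker p$. I would check this on the tree basis of $\ch\gen{sG}$, exploiting the normal form of Proposition 2.3 (a single generator from $sG$ at the root, all other nodes desuspended) to choose lifts that behave multiplicatively.

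The main obstacle is this final identification. Two points require care. First, the well-definedness and injectivity of the lift $g^*f$, and the inclusion $\im f\subseteq\im g$ on which the cokernel depends, must be established from a tree-basis analysis of $\ker p$ rather than taken for granted. Second, the connecting homomorphism of a short exact sequence is a priori only additive, so its multiplicativity with respect to operadic composition is not formal; it has to be deduced from the explicit differential in relation (1) together with the existence of multiplicative lifts on the normal form, and one must confirm that $\im(g^*f)$ cuts $\gen{R}$ down to exactly $\partial R/\partial(\ker p)$ and no further. Once these are in place, the composite $H_*(\lu_G\scr{O})\xrightarrow{\delta}H_{*-1}(\ker p)\xrightarrow{\cong}\scr{R}_G(\scr{O})$ is the desired operad isomorphism.
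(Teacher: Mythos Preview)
Your opening reduction is correct and cleaner than what the paper does: the surjection $p:\ch\gen{sG}\to\lu_G\scr{O}$, together with Corollary~2.4, gives a $\Sigma$-module isomorphism $\delta:H_n(\lu_G\scr{O})\xrightarrow{\cong}H_{n-1}(\ker p)$, and $H_{*-1}(\ker p)=\partial R/\partial(\ker p)$ is just the definition of homology. The paper instead builds a $3\times 3$ diagram involving the auxiliary operad $\ch\gen{R,sG}$, the ideals $(\partial R)_{\ch\gen{R,sG}}$, and induced maps $\phi,\psi,\xi$; your path to the underlying $\Sigma$-module is shorter.

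The gap is in the second half. You need to identify the \emph{operad} $\scr{R}_G(\scr{O})=\coker(g^*f)$, a quotient of the free operad $\gen{R}$, with $H_*(\lu_G\scr{O})$ as an operad. Two issues arise. First, $\ker p$ is an operadic ideal, not a suboperad, so $H_{*-1}(\ker p)$ carries no intrinsic operadic composition; your proposed check that ``$\delta$ intertwines composition'' therefore has no target structure to intertwine with. The Leibniz formula (1) tells you that $\partial$ of a lifted composite is a \emph{sum} of terms, each with one factor differentiated, which does not express $\delta$ of a product as a product of $\delta$'s. Second, you have not shown that the ideal generated by $\im(g^*f)$ in $\gen{R}$ cuts down exactly to (the suspension of) $\partial R/\partial(\ker p)$; since $\gen{R}$ is free on the very large module $R=s(\ker\partial\cap\ker p)$, most of its basis trees are nontrivial composites, and you must show these composites are accounted for correctly by the relations --- this is the substantive combinatorial content, not a formality.

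This is precisely why the paper introduces $\ch\gen{R,sG}$: by adjoining $R$ as honest operad generators (rather than working in the ideal $\ker p$), one can split the differential as $\partial_r+\partial_s$ and run a filtration/spectral-sequence style argument on tree shapes to control composites of elements of $R$. The paper's two technical paragraphs --- showing that homology classes in $(\partial R)_{\ch\gen{R,sG}}$ are differentials of elements of $\gen{R}$, and that homology of $\ker\phi$ consists of differentials from $\ker(\gen{R}\to\scr{R})$ --- are exactly what is needed to match the operadic relations in $\scr{R}_G(\scr{O})$ with the boundaries $\partial(\ker p)$. Your sketch would need an analogue of this step; as written, the proposal stops at the point where the real difficulty begins.
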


\begin{proof}
We have the following commutative diagram with exact rows and columns.  Here $(A)_\scr{P}$ would denote the ideal generated by $A$ in $\scr{P}$.
$$ \text{
\includegraphics[scale = 0.3]{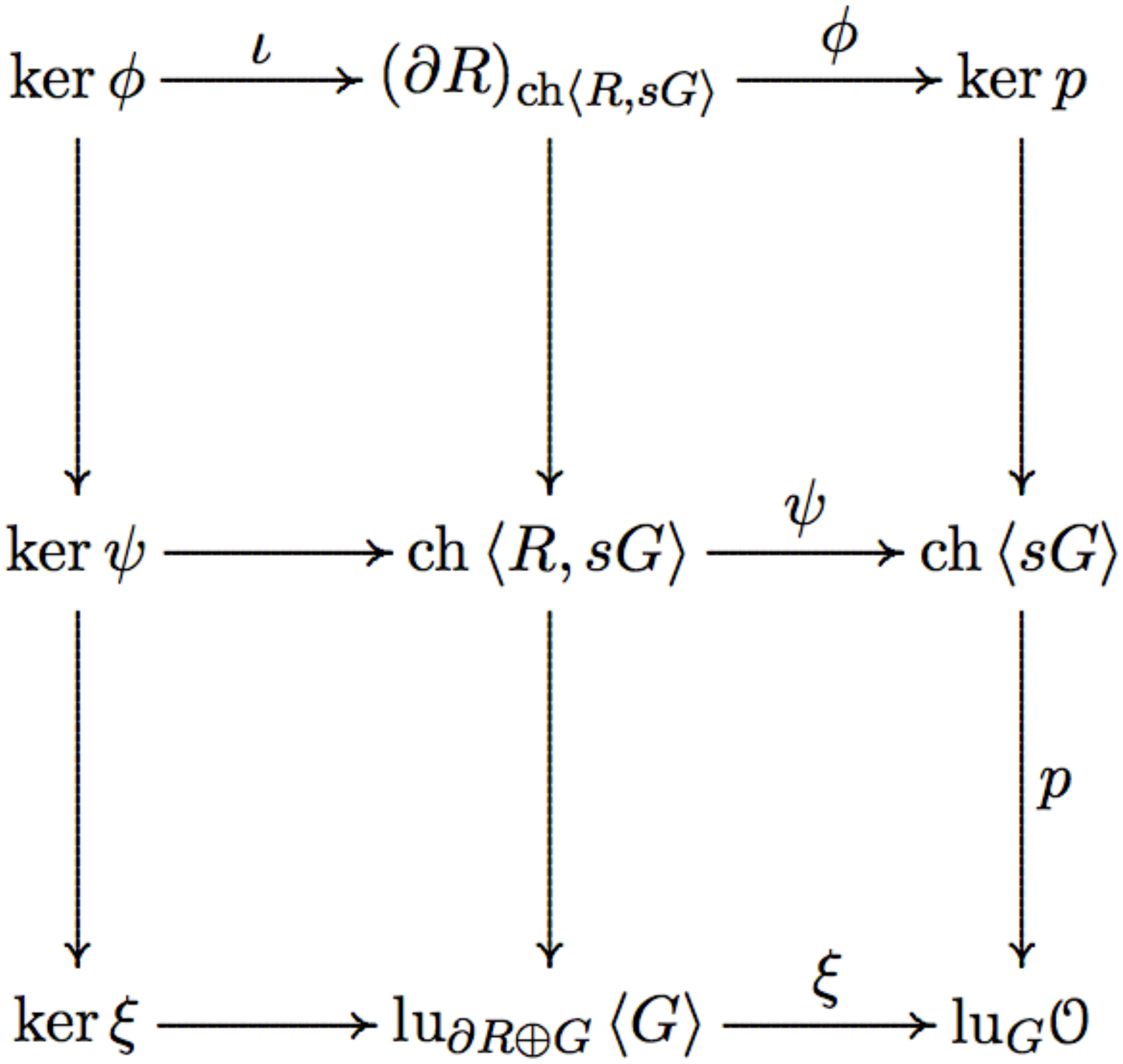}
} $$
In this diagram, $\phi$, $\psi$, and $\xi$ are induced by the inclusion of $R$ into $\ch\gen{sG}$.  By the long exact sequence in homology, we see that the homology of anything in the middle row is trivial, and therefore the homology of things in the top row are differentials of the homology of things in the bottom row.

We will now show the homology of $(\partial R)_{\ch{\gen{R,sG}}}$ consists of differentials of elements of $\gen{R}$.   Suppose that $k$ is a homology class for $(\partial R)_{\ch{\gen{R,sG}}}$.  Then for some $h\in \ch{\gen{R,sG}}$, we have $k = [\partial h]$.  Without loss of generality, $h$ is a linear combination of composition trees which are linearly independent modulo $\ker\partial$ and which are chosen so that each tree contains an element of $R$ as one of its nodes, and no tree contains an element of $\partial R$.  Looking at the definition for the differential for the chainification, we see that we can write $\partial = \partial_r + \partial_s$ where $\partial_r$ only differentiates the $R$ nodes and $s$ only differentiates the $sG$ nodes.  Also, as differentiating does not change tree shape, it is easy to see that the homology of $\ch{\gen{R,sG}}$ with respect to $\partial_s$ is given by the elements which don't contain any $sG$ nodes.  However, as $\partial h \in (\partial R)_{\ch{\gen{R,sG}}}$, we have that $\partial_s h = 0$ so there exists a $g$ such that $h - \partial_s g \in \gen{R}$.  Furthermore we have $\partial h = \partial_r h = \partial(h - \partial_s g) + \partial_r \partial_s g $ and we see that $\partial_s\partial_r\partial_s = 0$.  This allows us to see that $\partial_s$ is defined on $\ker \partial_r \cap (\partial R)_{\ch{\gen{R,sG}}}$ and since $\partial_s$ does not change tree shapes or alter nodes from $R$ or $\partial R$, the homology of $\ker \partial_r \cap (\partial R)_{\ch{\gen{R,sG}}}$ with respect to $\partial_s$ consists of the elements which have no nodes from $sG$.  We may therefore choose a $g'\in \ker \partial_r \cap (\partial R)_{\ch{\gen{R,sG}}}$ so that and $\partial_sg' = \partial_r\partial_s g$.  Finally we get that $\partial h = \partial(h-\partial_sg) + \partial g'$ which implies $k$ is the class of a differential of something in $\gen{R}$.  

We will now show that the homology of $\ker \phi$ consists of differentials of elements of the kernel of the map $\pi: \gen{R}\to \scr{R}$.  There is a basis for $\ker\psi$ given by applying $\text{id}-\psi$ to each basis tree in $\ch\gen{R,sG}$.  If we look at the span of these basis trees mapped into $\lu_{\partial R\oplus G} \gen{G}$, which must homologously give the suspension of $\ker\phi$, we get $$ \ker p \oplus_{\text{vect}} (\text{id}-\psi)(R)_{\lu_{R\oplus sG} \gen{G}} \hookrightarrow \lu_{\partial R\oplus G} \gen{G}$$ This is isomorphic as a graded vector space to $\ker p \oplus (R)_{\gen{G\oplus sG\oplus R}}$ but with the differential given by $\partial = \partial_g - \partial_r + \partial_s$, where $\partial_g$ differentiates the $\gen{G\oplus sG}$ part with the chainification differential, $\partial_r$ goes from the $\gen{G\oplus sG\oplus R}$ part to the $\gen{G\oplus sG}$ part and is given by differentiating only the nodes from $R$ with the chainification differential and then projecting to $\gen{G\oplus sG}$.  Lastly, $\partial_s$ is on the $\gen{G\oplus sG\oplus R}$ part and differentiates the nodes not from $R$ with the chainification differential.  There is an inclusion $u$ of $\ker p$ into this chain complex with chain cokernel $\lu_{\partial R\oplus G}\gen{G}$.  We see that any $h\in \ker p$ with $\partial h = 0$ will be in $\iota \partial R$ and have $\partial_gu h = 0$ and $h$ can be expressed as $\partial(\iota sh)$.  Then $ \iota\partial_r sh = u h$ and $\partial_s sh = 0$.  Finally this allows us to conclude from the homology long exact sequence that $\phi$ is injective in homology which means the nontrivial homology classes in $\ker\phi$ are the elements that map to nontrivial homology classes in $(\partial R)_{\ch{\gen{R,sG}}}$.  We reach the desired conclusion by identifying $\ker\pi$ with the $\gen{R}$-parts of the cyclic elements of $\ker p \oplus (R)_{\gen{G\oplus sG\oplus R}}$ with the differential we defined earlier.

Finally we have what we need for the homology long exact sequences of the diagram to give the isomorphism $H_*(\lu_G \scr{O})\to \scr{R}$.

\end{proof}

One will find that any element of $R$ that is not a composition of elements in the minimal generating $\Sigma$-module for the ideal will go to zero in $\scr{R}$.  This means that $\scr{R}$ is generated by the minimal relations for $\scr{O}$ with respect to the generating $\Sigma$-module $G$. It is fairly easy to explicitly carry out the construction of $\scr{R}$ for simple operads.  Doing so yields the following homologies of universal linking operads.

\begin{exm}
\emph{$H_*\lu\, \scr{A}ss$} is the free operad generated by a degree $1$ arity $3$ element, and that element is the class given by $s\mu(1\otimes\mu) - s\mu(\mu\otimes 1)$.
\end{exm}

\begin{exm}
\emph{$H_*\lu_G(\scr{C}omm) = \scr{L}ie$}  with arity equal to one more than the degree, where $G$ is the free $\Sigma$-module on one arity two generator.
\end{exm}

\begin{exm}
\emph{$H_*\lu(\scr{L}ie)$} is free on a single degree one, arity three generator which is invariant under cyclic permutations but not odd permutations.
\end{exm}

\begin{cnj}
\emph{$H_*\lu(\scr{C}omm,\scr{A}ss)$} is free on a degree one arity three generator for which the Jacobi identity holds.
\end{cnj}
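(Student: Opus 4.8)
The plan is to compute $H_*\lu(\scr{C}omm,\scr{A}ss)$ directly from an explicit presentation. First I would unwind the pushout definition using Propositions 2.2 and 2.3. Writing $\scr{C}omm=\gen{m|\text{assoc}}$ with $m$ the symmetric arity-$2$ generator, the linking datum $\varphi$ is forced by $\Sigma$-equivariance: the suspension of $m$ is antisymmetric, so $\varphi$ must land in the antisymmetric part of $\scr{A}ss(2)$, namely the commutator $\mu-\mu^{(12)}$, where $\mu$ is the generator of $\scr{A}ss$. This exhibits $\lu(\scr{C}omm,\scr{A}ss)$ as the differential graded operad on $\mu$ in degree $0$ together with a degree-$1$ antisymmetric generator $\nu$, with $\partial\nu=\mu-\mu^{(12)}$, modulo the associativity of $\mu$ and the suspended associativity of $\nu$ carried over from $\scr{C}omm$.

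The next step is the homology in low arity. In degree $0$ the image of $\partial$ is the ideal generated by the commutators, so $H_0=\scr{A}ss/(\mu-\mu^{(12)})=\scr{C}omm$; this is a genuine difference from $\lu\scr{A}ss$ and $\lu\scr{L}ie$, where the first factor is free on a regular representation and the degree-$0$ part collapses entirely. Thus the homology operad already carries a commutative product in degree $0$, and the conjecture's "free on a Jacobi generator" should be read as describing the positive-degree part over this copy of $\scr{C}omm$. In arity $3$ the complex has the form [degree $2$: one-dimensional] $\to$ [degree $1$] $\to$ [degree $0$: six-dimensional], and a rank count, using $H_0=\scr{C}omm(3)$ (one-dimensional), yields $H_1$ of arity $3$ equal to four dimensions and $H_2=0$.

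The crux of the arity-$3$ computation is to recognize these four dimensions as the Jacobi representation. Setting $c=\mu-\mu^{(12)}=\partial\nu$, the cycle $\omega=\sum_{\text{cyc}}\nu(c\otimes1)$ satisfies $\partial\omega=\sum_{\text{cyc}}c(c\otimes1)=\sum_{\text{cyc}}[[\,\cdot,\cdot],\cdot]=0$ precisely by the Jacobi identity for the commutator in $\scr{A}ss$. I would then show that this class together with its $S_3$-translates spans the six-dimensional regular representation modulo the two cyclic sums — exactly the space of arity-$3$ operations for which the Jacobi identity holds, which is four-dimensional — and that no nonzero combination is a boundary of the one-dimensional degree-$2$ group.

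Finally I would promote this to the statement that the whole homology is freely generated, over the degree-$0$ $\scr{C}omm$, by the arity-$3$ Jacobi class. Here I would imitate the proof of Proposition 3.1, splitting the chainification differential as $\partial=\partial_r+\partial_s$ according to whether it differentiates the nodes coming from the suspended $\scr{C}omm$-relations or the generators, and running the associated spectral sequence so that the acyclic $\ch$-factor collapses and only the relation module of $\scr{C}omm$, realized through the commutator, survives. The hard part will be this last step: Proposition 3.1 applies only to $\lu_G\scr{O}$, where the first factor is free, whereas here $\scr{C}omm$ contributes its own relations, so the relation module must be replaced by a two-operad analogue in which the suspended associativity of $\nu$ interacts with the associativity of $\mu$, and one must prove that these interactions generate no homology beyond the Jacobiator in any arity. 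Establishing the absence of such higher syzygies — equivalently, that the commutator realizes the minimal model of the inclusion of the $\scr{C}omm$-relations into $\scr{A}ss$ — is presumably why the statement remains a conjecture, and I would seek leverage either in the Koszulity of $\scr{C}omm$, whose relation module is dual to the Lie bracket, or in an explicit contraction of the acyclic factor $\ch\scr{C}omm$.
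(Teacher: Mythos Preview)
The statement you are addressing is explicitly a \emph{conjecture} in the paper; there is no proof to compare against. The only justification the authors offer is the single sentence following it, namely the cokernel sequence
\[
\lu(\scr{L}ie,0)\;\to\;\lu(\scr{A}ss)\;\to\;\lu(\scr{C}omm,\scr{A}ss),
\]
read in light of the already-computed Examples~3.1--3.3. Your proposal therefore goes far beyond anything in the paper: the explicit presentation, the arity-by-arity rank count, the identification of the Jacobiator cycle $\omega=\sum_{\text{cyc}}\nu(c\otimes 1)$, and the $\partial_r+\partial_s$ spectral-sequence plan are all yours, and you correctly identify the higher-syzygy control as the genuine obstacle that keeps this a conjecture.

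One concrete mismatch is worth flagging. You take the minimal (trivial-representation) generator for $\scr{C}omm$, argue that suspension makes it antisymmetric and hence forces $\varphi$ onto the commutator, and then obtain $H_0=\scr{C}omm$, which contradicts the conjecture as literally stated and forces you to reinterpret ``free'' as ``free over the degree-zero $\scr{C}omm$.'' The paper's own evidence points elsewhere. In this paper suspension does not twist the $\Sigma$-action, and in the parallel Example~3.2 the authors use for $\scr{C}omm$ the \emph{free} $\Sigma$-module on one arity-two class rather than the minimal one. With that generating module the natural $\varphi$ is simply $m\mapsto\mu$, the cokernel sequence above becomes the quotient of $\lu(\scr{A}ss)=\lu(\gen{s\mu},\scr{A}ss)$ by the image of $\lu(\scr{L}ie,0)$, and $H_0$ vanishes above arity one so that the conjecture can be read at face value. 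Under that set-up the extra relation of $\scr{C}omm$ relative to $\scr{A}ss$ --- commutativity, not associativity --- is what supplies, via Proposition~3.1, the degree-one arity-three class, and the Jacobi constraint then appears exactly as the image of the associator from Example~3.1 under the cokernel map. Your low-arity analysis and your candid assessment of the remaining difficulty are sound, but the $H_0$ discrepancy indicates that the presentation you unwound is not the one the authors intend.
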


We suspect this due to the following cokernel sequence. $$\lu(\scr{L}ie,0)\to \lu(\scr{A}ss) \to \lu(\scr{C}omm,\scr{A}ss)$$  

\subsection{Chain Complex Algebras}

\begin{dfn}
An operad $\scr{O}$ is integral if for all $a\in \scr{O}$ and all $b\in \scr{O}$ if $a\circ_1 b = 0$ then $a = 0$ or $b= 0$.
\end{dfn}
 
The reader should note that virtually all notable operads in $\text{Vect}_{\mathbb{K}}$ are both integral and quadratic, for instance the associative operad $\scr{A}ss$, the operad for Lie-algebras $\scr{L}ie$, pre-Lie algebras $\scr{P}re\scr{L}ie$, and the operad of commutative algebras $\scr{C}omm$.  

\begin{thm}
Let $\scr{O}$ be an integral operad which is concentrated in degree zero and generated by $G$.  Suppose that $C$ is a chain complex which has an algebra structure over $\scr{O}$ which acts trivially on $H_*C$.  Then $H_* C$ has a (potentially nontrivial) algebra structure over \emph{$H_* \lu_G(\scr{O})$}, which is unique if \emph{$H_1\text{End}(C)$} vanishes in arity equal to that of $G$.
\end{thm}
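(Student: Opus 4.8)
The plan is to realize both the hypothesis and the conclusion as statements about operad maps into endomorphism operads, and to transport one to the other using the pushout description of $\lu_G(\scr{O})$. An $\scr{O}$-algebra structure on $C$ is an operad map $\rho\colon\scr{O}\to\text{End}(C)$ of chain-complex operads. Because $\scr{O}$ is concentrated in degree zero, each generator $g\in G$ goes to a degree-zero element $\rho(g)$, i.e.\ a chain map, and the hypothesis that the action is trivial on $H_*C$ says exactly that $H_*(\rho(g))=0$. Over a field a degree-zero chain map is null-homotopic if and only if it is zero on homology (split each complex as its homology plus a contractible summand), so for every generator $g$ I may fix $h_g\in\text{End}(C)$ of degree one with $\partial h_g=\rho(g)$.

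I would then feed these null-homotopies into $\lu_G(\scr{O})=\lu(\gen{sG},\scr{O})$, which is the pushout of $\scr{O}\leftarrow\ch\gen{G}\to\ch\gen{sG}$. By the $\ch\dashv V$ adjunction (Proposition 2.1), a chain-operad map $\ch\gen{sG}\to\text{End}(C)$ is the same as a graded-operad map $\gen{sG}\to V\text{End}(C)$, hence is freely determined by $sg\mapsto h_g$. The pushout compatibility over $\ch\gen{G}$ reads precisely $\partial h_g=\rho(g)$ on the $\iota$-part and $0=0$ on the $\partial$-part, so the $h_g$ glue $\rho$ and the map out of $\ch\gen{sG}$ into a single $\Phi\colon\lu_G(\scr{O})\to\text{End}(C)$. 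Applying $H_*$, composing with the natural operad map $H_*\text{End}(C)\to\text{End}(H_*C)$ (an isomorphism over a field), and using the identification $H_*\lu_G(\scr{O})\cong\scr{R}_G(\scr{O})$ of Proposition 3.1, I obtain an operad map $\scr{R}_G(\scr{O})\to\text{End}(H_*C)$, which is the asserted structure.

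For uniqueness I would track the dependence on the $h_g$. By the remark after Proposition 3.1, $\scr{R}_G(\scr{O})$ is generated by the suspended minimal relations, and each such class has a representative $\tilde r$ obtained from a relation $r=\sum_\alpha c_\alpha T_\alpha$ by suspending the root generator $g_\alpha$ of each tree $T_\alpha$; since the remaining nodes lie in the degree-zero operad $\scr{O}$ one checks $\partial\tilde r=\sum_\alpha c_\alpha T_\alpha=r=0$. Integrality together with the single-arity hypothesis is what forces each root to be a single generator of arity equal to that of $G$, so that $\Phi(\tilde r)=\sum_\alpha c_\alpha\,h_{g_\alpha}(\rho(\cdots))$ depends on the choices only through the $h_g$ with $g$ of arity $|G|$. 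Two choices differ by degree-one cycles $z_g\in\text{End}(C)(|G|)$; if $H_1\text{End}(C)$ vanishes in arity $|G|$ then $z_g=\partial w_g$, and since every $\rho(\cdots)$ is a cycle the induced change $\sum_\alpha c_\alpha\,z_{g_\alpha}(\rho(\cdots))=\partial\big(\sum_\alpha c_\alpha\,w_{g_\alpha}(\rho(\cdots))\big)$ is a boundary, so $[\Phi(\tilde r)]$ and hence the whole structure is independent of the choices.

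The main obstacle I expect is not producing $\Phi$ but the bookkeeping that makes the homology-level map land correctly on $\scr{R}_G(\scr{O})$: one must confirm that the cycles $\tilde r$ genuinely generate $H_*\lu_G(\scr{O})$ with suspensions concentrated on arity-$|G|$ roots, which is exactly where integrality and the degree-zero, single-arity hypotheses enter through Proposition 3.1, and that the indeterminacy of the construction is captured by $H_1\text{End}(C)$ in that arity and by nothing else. Everything upstream of this is either formal adjunction and pushout manipulation or the standard field-coefficient fact relating null-homotopy to vanishing on homology.
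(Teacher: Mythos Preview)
Your approach is the paper's: choose null-homotopies $h_g$ for the generators, use the universal (pushout/adjunction) property of $\lu_G$ to produce a chain-operad map $\lu_G(\scr{O})\to\text{End}(C)$, and pass to homology via $H_*\text{End}(C)\to\text{End}(H_*C)$; for uniqueness the paper argues in one line that the lifting is unique up to homotopy when $H_1\text{End}(C)$ vanishes and then invokes integrality, without routing through Proposition~3.1 or the explicit representatives $\tilde r$ as you do. One small correction: the fact that each tree in a relation has a single generator at its root, and that $\tilde r$ carries exactly one suspended node, is automatic from the tree basis of $\gen{G}$ together with the degree-zero hypothesis on $\scr{O}$, not a consequence of integrality---both your write-up and the paper's leave the precise role of integrality somewhat opaque.
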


\begin{proof}
$C$ has an algebra structure over an operad concentrated in degree zero, $\mathscr{O}$, given by the structural morphism $$\alpha_n: \scr{O}(n) \to \hom_{\text{chain}}(C^{\otimes n},C)$$ and we have inclusions $\iota_n: \hom_{\text{chain}}(C^{\otimes n},C)\to \hom_{\text{vect}}(C^{\otimes n},C)$, the codomains of which form a chain complex operad denoted $\text{End}( C)$ with differential given by commutation with the differentials of its domain and codomain.  Since $\scr{O}$ acs trivially on $H_*(C)$, we know that the image of $\mathscr{O}$ in $\text{End}(C)$ is in the image of the differential, so, if $g$ denotes the inclusion of $\Gen(\scr{O})$ into $\scr{O}$, we can choose a lifting of $\iota\alpha g$ up $\partial$.  Therefore, we have a map $\lu \scr{O} \to \text{End}(C)$ given by the universal property, and since the cycles are the chain morphisms and the boundaries are the maps that are trivial on homology, we see that this induces maps $\xi_n: H_*\lu_G \scr{O} (n) \to \hom (H_*C^{\otimes n},H_*C).$  This is therefore a $H_*\lu \scr{O}$ operad algebra structure on $H_*C$.   The lifting of $\iota\alpha g$ will be unique up to homotopy if $H_1\text{End}( C)$ vanishes, and a change in lifting of $\iota\alpha g$, due to integrality, will induce the same maps $\xi_n$, and therefore the same algebra structure on $H^*(C)$.  This yields the uniqueness result.
\end{proof}

\begin{exm}
Consider a topological semigroup $A$ with $\tilde{H}_n(A;\mathbb{K}) \neq 0$ only if $n = 1 \mod 3$.  Then there is a trinary operation on the homology of the semigroup given by Theorem 3.1, which is nontrivial in some cases.  
\end{exm}

\begin{proof}
It is clear that any homology chain complex, in this situation, satisfies the conditions of Theorem 3.1 for the operad of associative algebras, and from Example 1, we see this yields a trinary operation on the homology.  It remains to construct a nontrivial example.  Consider the pointed CW complex given by $A = S^1\vee D^3\vee S^4$, and the pointed topological semigroup structure given by a homeomorphism $S^1\wedge S^1\to \partial D^3$ and homeomorphisms $S^1\wedge D^3\to S^4_+$ and $D^3\wedge S^1\to S^4_-$ where $S^4_+$ and $S^4_-$ are two hemispheres of $S^4$.  All other smash products are taken to the base point.  On the cellular homology chain complex, this induces an $\scr{A}ss$-algebra structure which acts trivially on the homology and therefore yields $\rho : \tilde{H}_1(A;\mathbb{K})^{\otimes 3}\to \tilde{H}_4(A;\mathbb{K})$, which we can compute as follows, using the notation that square brackets denote homology class and angle brackets denote the chain complex generator from the cell. \begin{align}
\rho([S^1]\otimes [S^1]\otimes [S^1]) = [s\mu(\mu(\gen{S^1}\otimes \gen{S^1})\otimes \gen{S^1} - \gen{S^1}\otimes\mu(\gen{S^1}\otimes \gen{S^1}) )]\nonumber\\
= [s\mu(\gen{\partial D^3}\otimes \gen{S^1} - \gen{S^1}\otimes \gen{\partial D^3})] = [\mu(\gen{D^3}\otimes \gen{S^1}) - \mu(\gen{S^1}\otimes \gen{D^3})]\nonumber\\
 = [\gen{S^4_+} - \gen{S^4_-}] = [S^4] \nonumber
\end{align}
Therefore $\rho$ is nontrivial, and we have constructed an explicit geometric example of when the construction from Theorem 3.1 is nontrivial.  

\end{proof}

\begin{exm}
Consider a topological space $A$ for which $\tilde{H}^n(A;\mathbb{K}) \neq 0$ only if $n = 1 \mod 3$.  Then there is a triple product $$\rho: \tilde{H}^k(A; \mathbb{K})\otimes \tilde{H}^m(A; \mathbb{K}) \otimes \tilde{H}^n(A; \mathbb{K})\to \tilde{H}^{k + m + n + 1}(A; \mathbb{K})$$ given by Theorem 3.1 applied to the cup product on the suspension of $A$.  
\end{exm}

\begin{proof}
Easy consequence of Theorem 3.1
\end{proof}

One should note that in the characteristic zero case, this operation is always trivial due to the properties of the cup product on the suspension.  However, we have yet to determine if there are nontrivial examples in the characteristic 2 case.  

If $C$ is a chain complex with $H^1(\text{End}(C)) = 0$ and an operad algebra structure for the integral operad $\scr{O}$ with structural map $\phi: \scr{O}\to \hom(C^{\otimes n},C)$.  Then this induces an $\scr{O}$ structure on $H_*(C)$ with structural map denoted by $H\phi: \scr{O} \to \text{End}(H_*C)$.  By Theorem 3.1, there is a $H_*(\lu(\ker(H\phi)))$ algebra structure on $H_*C$.  This gives a way of gaining additional information from any operad algebra structure on a siutable chain complex.

\section{Higher Linking Operads}

So far we have only considered the case of universal linking operads where $\varphi$ is an inclusion of a generating module. However, there are many interesting situations in which this is not the case.  For instance, we can construct a universal linking operad $\lu(\gen{sR},\gen{G})$ for any pair of free operads with a morphism $\gen{R}\to\gen{G}$.  For the standard presentation of the associative operad, $\scr{A}ss = \gen{\mu,\partial \rho }$, let us consider the homology of $\lu(\gen{\rho},\gen{\mu})$.  Basis elements of this operad are linear combinations and permutations of trees which have nodes of arity two or three, and with degree equal to the number of arity three nodes.  This can be embedded in the cellular complex of the associahedra, and we can think of this operad geometrically in this way.  We see that $H_*\lu(\gen{\rho},\gen{\mu})$ is the homology of the subset of the associahedra (the $A_\infty$ operad) which includes only the cubic cells.  In that case, the homology is generated by the pentagonal cells and points in the associahedra.  From this, we see that the homology of this universal linking operad is very nontrivial indeed, as it is not even finitely generated.  

From this geometric interpretation, we see that when we apply Theorem 3.1 to this universal linking operad, we get the first obstruction to a given nonassociative product extending to an $A_\infty$ structure wth a given choice of associator, because if any of the pentagons cannot be filled in, then there is no chance of filling in the entirity of the $A_\infty$ operad.  

It becomes clear from looking at this associahedron example that it may be natural to link on more operads to this universal linking operad in order to get the higher cells in our complex.  To do this, we would want to universally link the pentagon to $\lu(\gen{\rho},\gen{\mu})$ by taking $\lu(\gen{\rho'},\lu(\gen{\rho},\gen{\mu}))$, where $\rho'$ is the pentagon cell.  To repeat this process, it will be convenient to define a functor $\lk(\scr{O}_1,\scr{O}_2,...,\scr{O}_n)$ recursively as $$\lk(\scr{O}_1,\scr{O}_2,...,\scr{O}_{n+1}) = \lu(\scr{O}_{n+1},\lk(\scr{O}_1,\scr{O}_2,...,\scr{O}_n))$$ which assumes specified morphisms from the desuspension of each operad's generating module to the previous link operad.  We write $\lk(\scr{O}_1,\scr{O}_2,... )$ to mean the colimit of the finite links.  

\begin{exm} 
There is an isomorphism \emph{$$A_\infty \leftrightarrow \lk(\gen{\mu},\gen{\rho_1},\gen{\rho_2},\gen{\rho_3},...)$$} where each $\partial \rho_i$ is the boundary of the $i$-dimensional associahedron in terms of the other associahedra, and $\mu$ is the point.   
\end{exm}

\begin{proof}
The right hand side has basis given by arbitrary trees with permutation, which gives an isomorphism to $A_\infty$.  See \cite{vogt}\cite{muro}.  The differentials are equal by construction.  
\end{proof}

Let us discuss the process involved in this example in more depth.  Each time we add another operad to the link, we are killing all of the nontrivial homology classes of the universal linking operad that we already have.  This happens because the ideal generated by the new relation includes all of the nontrivial homology classes of the current link operad.  New homology classes arise in the cokernel, which are killed in the next link.  Each time we kill homology the nontrivial part of the homology vanishes entirely in one higher arity.  Therefore, after killing homology forever, we reach an operad which is acyclic except for the point classes that were never killed originally.  Therefore, the colimit is a chain complex operad which has homology equivalent to the original operad.  We will now generalize the example.

\begin{prp}
Let $\scr{O}$ be an operad and let $\gen{G|R}$ be its presentation.  Then define $\Sigma$-modules recursively as follows.  $R_0 = R$, and $R_{n+1} = \ker(f_n)(d_n)$, where \emph{$$f_n: H_*\lk(\gen{G},\gen{sR_0},\gen{sR_1},\gen{sR_2},...,\gen{sR_n}) \to \scr{O}$$} sets each $R_i$ to zero, and where $d_n$ is the smallest arity in which $\ker(f_n)(d_n)$ is nontrivial. The defining morphisms for the link operad can be any maps that take the homology classes to their representations.  Then \emph{$$  \lk(\gen{G},\gen{sR_0},\gen{sR_1},\gen{sR_2},...)$$}
is a quasi-free resolution of $\scr{O}$.
\end{prp}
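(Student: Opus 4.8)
The plan is to make rigorous the heuristic stated in the paragraph preceding the proposition: each link kills the lowest-arity nontrivial homology class and produces new homology only in strictly higher arity, so that in the sequential colimit the augmentation to $\scr{O}$ becomes a quasi-isomorphism. I would first dispose of the two formal facts that cost nothing. Quasi-freeness follows by induction from Proposition 2.2 together with the remark that $\lu$ is built from a left adjoint and a colimit: $\ch$ of a free operad is free, each $\lu(\gen{sR_{n+1}},\scr{L}_n)$ adjoins the free generators $sR_{n+1}$ to the quasi-free operad $\scr{L}_n = \lk(\gen{G},\gen{sR_0},\dots,\gen{sR_n})$, and a sequential colimit of free graded operads along generator inclusions is again free; thus the colimit $\scr{L}_\infty$ is free on $G$ together with all the $sR_n$. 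For $H_0$, the first link on $\gen{sR_0}=\gen{sR}$ imposes $\partial(sr)=r$, so $H_0\lk(\gen{G},\gen{sR_0}) = \gen{G}/(R)=\scr{O}$; since every later generator $sr$ has $\deg(sr)\ge 2$, its differential has positive degree and contributes no new degree-zero boundary, whence $H_0$ stays equal to $\scr{O}$ and each $\ker f_n$ is concentrated in positive degree for $n\ge 0$.

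Because we choose each $\varphi$ to send a basis of homology classes to representing cycles, $\varphi$ injects into $\ker\partial$, so the structure maps $\scr{L}_n\hookrightarrow\scr{L}_{n+1}$ are the injections guaranteed in Section 3 and $H_*\scr{L}_\infty=\operatorname{colim}H_*\scr{L}_n$. The augmentations $f_n$ assemble into $f\colon H_*\scr{L}_\infty\to\scr{O}$, which is surjective since $G$ generates $\scr{O}$. It therefore suffices to prove $f$ is injective, that is, that in each fixed arity $m$ the kernels $\ker f_n(m)$ vanish for $n$ large; injectivity of $f$ then forces $H_{>0}\scr{L}_\infty=0$, since positive homology lies in $\ker f$, and gives $H_0\scr{L}_\infty\cong\scr{O}$.

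The heart of the argument is the \emph{arity-increase lemma}: $d_{n+1}>d_n$. I would show that linking on $\gen{sR_{n+1}}$, with $R_{n+1}=\ker(f_n)(d_n)$ placed in arity $d_n$, has three effects: (i) it leaves $H_*$ unchanged in arities $<d_n$; (ii) it kills $\ker f_n$ in arity $d_n$, via the relations $\partial(sr)=\varphi(r)$ exhibiting the arity-$d_n$ classes as boundaries; and (iii) it creates new homology only in arities $>d_n$. Parts (i) and (ii) are immediate, as the new generators sit in arity $d_n$ and operadic composition never lowers arity. Part (iii) is the main obstacle. The point is that no new cycle appears in arity $d_n$: a bare combination $\sum c_i\,sr_i$ has differential $\sum c_i\varphi(r_i)$, which vanishes only if $\sum c_i[r_i]=0$, and since $R_{n+1}$ is a basis of the arity-$d_n$ part of $\ker f_n$ with $\varphi$ injective on homology, no such nontrivial cycle exists; any genuinely new class must involve a composition of a new generator and hence lands in arity strictly above $d_n$. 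To convert this into a proof I would apply the combinatorial homology computation of Proposition 3.1, in which the classes of $\scr{R}$ are suspensions of the minimal syzygies among the generators, and observe that a minimal syzygy involving an arity-$d_n$ generator first occurs only after composing it. The technical crux is adapting Proposition 3.1 from the differential-free setting $\lu_G(\scr{O})$ to $\lu(\gen{sR_{n+1}},\scr{L}_n)$, where $\scr{L}_n$ already carries the chainification differential; I expect to handle this by the same splitting $\partial=\partial_r+\partial_s$ used in the proof of Proposition 3.1, now letting $\partial_s$ additionally account for the internal differential of $\scr{L}_n$.

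Finally I would assemble the convergence argument. Since $d_n$ is strictly increasing it tends to infinity, so for each arity $m$ there is $N$ with $d_n>m$ for all $n\ge N$; by (i) every link past stage $N$ adjoins generators only in arity $>m$, so the arity-$m$ part of $H_*\scr{L}_n$ is constant for $n\ge N$ and equals that of $\scr{L}_\infty$. By (ii) and the minimality defining $d_N$, this stable value has $\ker f(m)=0$. As $m$ is arbitrary, $f$ is injective, and combined with the first two paragraphs this shows $\scr{L}_\infty$ is a quasi-free operad whose augmentation to $\scr{O}$ is a quasi-isomorphism, i.e. a quasi-free resolution of $\scr{O}$.
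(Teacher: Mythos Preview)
Your argument follows the same line as the paper's: quasi-freeness comes from the link being built out of free operads, and the quasi-isomorphism from the fact that each step kills the lowest-arity part of $\ker f_n$ while new generators only appear in that arity or higher, so that $d_n\to\infty$ and in any fixed arity the homology eventually stabilizes with trivial kernel. The paper's proof is a three-sentence sketch of precisely this; yours is a careful expansion of it, and is correct.

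One remark: the ``technical crux'' you flag---adapting Proposition~3.1 to $\lu(\gen{sR_{n+1}},\scr{L}_n)$ with its internal differential---is not actually needed for this proposition. Your bare-combination argument already shows that in arity $d_n$ there are no new cycles and the new boundaries are exactly the span of the $\varphi(r_i)$, so $\ker f_{n+1}$ vanishes in all arities $\le d_n$; together with~(i) that gives $d_{n+1}>d_n$, which is all the convergence paragraph uses. An adapted Proposition~3.1 would tell you what the new homology looks like in arities strictly above $d_n$, i.e.\ give explicit formulas for $R_{n+2}$, but Proposition~4.1 does not require that information.
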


\begin{proof}
We see this operad is quasi-free because it is a link of free operads.  To see that it is equvalent to $\scr{O}$, observe that the homology classes in the kernel of the map to $\scr{O}$ are trivialized by construction in one higher arity for each step in the colimit. The equivalence for the resolution is $\text{colim}_nf_n$.
\end{proof}

\begin{prp}
Any operad of the form \emph{$\lk(\gen{M_0},\gen{M_1},\gen{M_2},...)$} is a minimal operad if for all $i < j$, we have \emph{$\text{max}\,\text{arity}(M_i) < \text{min}\,\text{arity}(M_j)$}.  
\end{prp}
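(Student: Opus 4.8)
The plan is to verify the two defining properties of a minimal operad directly: that the operad is quasi-free, and that its differential is \emph{decomposable}, i.e.\ it carries each generator into the ideal of composites of at least two generators (equivalently, writing $V = \bigoplus_i M_i$ for the generating module, the induced linear part $V \to V$ of the differential vanishes). Quasi-freeness is immediate from the construction: as already observed for the $A_\infty$ example, a link of free operads has underlying graded operad freely generated by arbitrary decorated trees with permutation, so the underlying graded operad of $\lk(\gen{M_0},\gen{M_1},\ldots)$ is free on $V$. The entire content of the proposition is therefore the decomposability of the differential, and I expect the arity hypothesis to be exactly what forces it.

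Next I would pin down where the differential sends a generator. By the recursive definition $\lk(\scr{O}_1,\ldots,\scr{O}_{n+1}) = \lu(\scr{O}_{n+1},\lk(\scr{O}_1,\ldots,\scr{O}_n))$ together with the chainification differential of Proposition 2.3 (under which a fresh generator differentiates to its image under the chosen linking map), a generator $x$ coming from $M_j$ differentiates into the previously built link $\lk(\gen{M_0},\ldots,\gen{M_{j-1}})$. Thus $\partial x$ is a linear combination of decorated trees all of whose nodes are generators drawn from $M_0,\ldots,M_{j-1}$; no generator from $M_j$ or higher can occur in $\partial x$, since those modules are linked on only at later stages.

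The key step is the arity count. Operad differentials preserve arity, so every tree occurring in $\partial x$ has arity equal to $\text{arity}(x)$. Suppose, toward a contradiction, that $\partial x$ had a linear term, that is, a summand equal to a single generator $y \in M_i$ with $i < j$. Then $\text{arity}(y) = \text{arity}(x)$. But the hypothesis gives $\text{arity}(y) \le \max\text{arity}(M_i) < \min\text{arity}(M_j) \le \text{arity}(x)$, a contradiction. Hence no summand of $\partial x$ is a single generator: the only nodes available have arity strictly below $\text{arity}(x)$, so reaching the arity of $x$ requires composing at least two of them, and every summand of $\partial x$ is therefore a composite of at least two generators. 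This is precisely decomposability of $\partial$.

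Finally I would assemble these observations: the underlying operad is free and the differential is decomposable, which is the definition of a minimal operad, giving the result. The main obstacle I anticipate is bookkeeping rather than conceptual---namely making the claim ``the differential of a generator from $M_j$ lands in the span of trees decorated by $M_0,\ldots,M_{j-1}$'' fully precise by tracing through the pushout and the chainification defining $\lu$, and being careful that it is the \emph{strict} inequality in the arity hypothesis (rather than $\le$) that is used, since strictness is exactly what excludes an arity-preserving linear term.
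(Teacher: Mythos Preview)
Your proposal is correct and follows exactly the route the paper takes, only with the details spelled out: the paper's one-line proof records that $\partial M_{n+1}\subseteq \gen{\oplus_{i\le n} M_i}$ by construction and that the arity hypothesis then forces decomposability, which is precisely your arity-count argument that a single-generator summand of $\partial x$ would have to match $\text{arity}(x)$ while being strictly smaller. Your explicit verification of quasi-freeness (tacit in the paper, which handled it in Proposition~4.1) is a harmless addition.
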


\begin{proof}
The decomposability of the differential follows from the increasing arity condition, and the diferential of $M_{n+1}$ is in $\gen{\oplus_{i = 0}^n M_i}$ by definition.  
\end{proof}

\begin{thm}
If $\gen{G|R}$ is a minimal presentation for $\scr{O}$, then the resolution given by Proposition 4.1 is the minimal model.  
\end{thm}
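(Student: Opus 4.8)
The plan is to show that the quasi-free resolution $\lk(\gen{G},\gen{sR_0},\gen{sR_1},\dots)$ of Proposition 4.1 has a decomposable differential, since a quasi-free resolution with decomposable differential is exactly a minimal model and the minimal model is unique up to isomorphism (see \cite{vogt}\cite{kapranov}\cite{markl}). Proposition 4.1 already supplies the quasi-freeness and the quasi-isomorphism to $\scr{O}$, so the whole content of the theorem is that the differential carries the generating module $M = G\oplus sR_0\oplus sR_1\oplus\cdots$ into the decomposables $\gen{M}^{(\ge 2)}$, equivalently that its linear part vanishes. I would establish this one linking summand at a time.

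First I would dispose of the two lowest steps. On $G$ the differential is zero, and on $sR_0$ it is the inclusion $R_0 = R\hookrightarrow \gen{G}$. Here minimality of the presentation enters: because $G$ is the minimal generating module, the composite $G = \gen{G}^{(1)}\hookrightarrow\gen{G}\to\scr{O}\to\Gen(\scr{O})$ is an isomorphism, so any relation $r = r_1 + r_{\ge 2}$ (linear part plus decomposable part) mapping to zero in $\scr{O}$ must have $r_1$ mapping to zero in $\Gen(\scr{O})$, whence $r_1 = 0$. Thus $R\subseteq\gen{G}^{(\ge 2)}$ is decomposable and $\partial(sR_0)$ lands in the decomposables. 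This is the step that genuinely uses the hypothesis; note that it does not require the arities of $R$ to exceed those of $G$, so the argument is strictly more general than the sufficient condition of Proposition 4.2.

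For the remaining steps I would show that $\partial(sR_{n})$, which by construction is a cycle representing a nontrivial class of $\lk(\gen{G},\dots,\gen{sR_{n-1}})$ lying in $\ker f_{n-1}$, can always be chosen decomposable. The key observation is that the only indecomposable cycles in the free operad $\gen{M_0\oplus\cdots\oplus M_{n-1}}$ are the single nodes from $G$: a single node from any $sR_i$ has nonzero differential, namely the nontrivial representative attached to it, so it is not a cycle. Passing to homology, the indecomposable quotient of $H_*\lk(\gen{G},\dots,\gen{sR_{n-1}})$ is therefore represented by $G$, on which $f_{n-1}$ restricts to the injection $G\to\Gen(\scr{O})$. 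Consequently a class in $\ker f_{n-1}$ has vanishing indecomposable part, i.e.\ it is decomposable in the homology operad; writing it as a sum of operadic compositions of lower classes and lifting each factor to a cycle produces a decomposable cycle representative. Choosing these as the linking morphisms, which Proposition 4.1 explicitly permits, makes $\partial(sR_n)$ decomposable.

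Assembling the base case with this uniform step gives $\partial(M)\subseteq\gen{M}^{(\ge 2)}$, so the resolution is minimal, and by uniqueness of the minimal model it is the minimal model. The step I expect to be the main obstacle is the claim that classes in $\ker f_{n-1}$ admit decomposable representatives: making rigorous that no nonzero combination of $sR_i$-nodes is a cycle (so that $\partial$ is injective on the non-$G$ single nodes), and that decomposability in the homology operad lifts to a decomposable cycle, requires careful use of the combinatorial homology description of Proposition 3.1 together with the fact that the construction selects linearly independent minimal classes at each stage.
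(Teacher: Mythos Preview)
Your overall strategy---prove decomposability of the differential, with the $sR_0$ case handled by minimality of $G$---matches the paper. But your uniform argument for $n\ge 1$ rests on a false intermediate claim. You assert that because no single $sR_i$-node is a cycle, the indecomposable quotient of $H_*\lk(\gen{G},\dots,\gen{sR_{n-1}})$ is $G$. This ``therefore'' is a non-sequitur: indecomposability in the free operad and indecomposability in the homology operad are different. Concretely, take $n=1$ for $\scr{A}ss$: in $\lk(\gen{\mu},\gen{s\rho})$ the pentagon boundary is a degree-$1$, arity-$4$ cycle whose class lies in $\ker f_0$. There are \emph{no} degree-$1$ homology classes of arity $<4$ (the only candidate, $s\rho$, is not closed), so the pentagon class cannot be written as an operadic composition of smaller classes and is indecomposable in $H_*$. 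Your step ``class in $\ker f_{n-1}$ is decomposable in homology, hence lift factors to cycles'' therefore fails at the very first instance; the pentagon class does have a decomposable \emph{representative} (five two-node trees), but not for the reason you give.

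Relatedly, your argument never invokes the minimality of $R$, yet that hypothesis is essential: if $\rho_1=g\circ_1\rho_2$ were a redundant relation, then $s\rho_1-g\circ_1 s\rho_2$ would be a cycle in $\ker f_0$ with non-decomposable linear part $s\rho_1$, and no choice of linking map avoids this. The paper's proof uses exactly this: for $sR_1$ it argues that a cycle with a single-node component $s\rho$ would exhibit $\rho$ in the ideal generated by the other relations, contradicting minimality of $R$; for $sR_i$ with $i>1$ it appeals instead to the arity increase $d_0<d_1<\cdots$ via Proposition~4.2 to rule out single-node components from $sR_1,\dots,sR_{i-1}$. What you actually need to show is that any cycle representing a class in $\ker f_{n-1}$ has its \emph{linear part in the free operad} killable by a boundary; this is a statement about representatives, not about the indecomposable quotient of homology, and its proof for the $sR_0$-part genuinely requires that $R$ be minimal.
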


\begin{proof}
From the proof of Proposition 4.2, we see that the conditions for minimality hold for $R_i$ with $i > 1$.  The only possible issue is that the differentials of $sR_1$ and $sR_0$ might not be decomposable for an arbitrary presentation.  We see that if $G$ is the minimal generating module, then the differential of $sR_0$ is decomposable.  Any homology class built from the elements of $R_0$ and $G$ with a nontrivial tree-basis component $\rho\in R$ will yield, via the basis components that cancel with $\partial \rho$ in the differential, an expression for the relation $\rho$ in the ideal generated by the other relations in $R$.  As we assume that the relations we chose are minimal, such a homology class can not exist.  Therefore, the derivative of $sR_1$ is decomposable as well.  This gives minimality.
\end{proof}

Minimal models are defined in \cite{markl} and \cite{loday} and are unique up to isomorphism.

\begin{prp}
All quasi-free chain complex operads $\scr{F}$ are equivalent to some link of free operads.
\end{prp}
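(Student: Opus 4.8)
The plan is to realize $\scr{F}$ directly as a link of free operads by filtering its generators according to homological degree, thereby generalizing the $A_\infty$ example rather than passing through a minimal model. Write $\scr{F} = \gen{M}$ as a graded operad with $M = \Gen(\scr{F})$, and decompose $M = \bigoplus_{d \geq 0} M_d$ into its degree-$d$ pieces; the grading is nonnegative by our standing convention on chain complexes. The first step is to observe that the differential drops filtration degree strictly: the grading of the free operad $\gen{M}$ is additive over composition trees, so for $m \in M_d$ every summand of $\partial m$ is a tree whose node degrees sum to $d-1$, and since each generator has nonnegative degree, each node lies in $M_{<d} := \bigoplus_{k<d} M_k$. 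Hence $\partial M_d \subseteq \gen{M_{<d}}$, and in particular $\partial M_0 = 0$. It is worth noting that no minimization is needed here: even a linear term of the differential (a generator differentiating to a single generator) must drop degree, so it is automatically absorbed into a lower filtration piece.

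Consequently each $\gen{M_{\leq n}}$ is a sub-dg-operad of $\scr{F}$, and these assemble into an exhaustive tower $\gen{M_{\leq 0}} \subseteq \gen{M_{\leq 1}} \subseteq \cdots$ with colimit $\scr{F}$. The next step is to identify this tower with the finite links. Setting $N_d = s^{-1}M_d$ and taking the linking map $\varphi_d : N_d \to U\gen{M_{<d}}$ to be the desuspension of $\partial|_{M_d}$, which is well defined and lands in $\ker\partial$ because $\partial^2 = 0$, I would invoke the universal property of the universal linking operad. Since $\gen{M_{\leq n}} \subseteq \scr{F}$ is a chain complex operad containing both $\gen{M_n}$ and $\lk(\gen{M_0},\ldots,\gen{M_{n-1}})$ in which the generators of $\gen{M_n}$ differentiate into $\gen{M_{<n}}$ exactly by $\varphi_n$, the universal property furnishes a canonical comparison map $\lk(\gen{M_0},\ldots,\gen{M_n}) \to \gen{M_{\leq n}}$ for each $n$, compatible with the tower.

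It then remains to check that each comparison map is an isomorphism. Surjectivity is immediate, since the image contains both $\gen{M_{<n}}$ and $M_n$, which together generate $\gen{M_{\leq n}}$. For injectivity I would use that a link of free operads is again free on the union of the generating $\Sigma$-modules, exactly as in the $A_\infty$ example whose underlying operad has basis the arbitrary decorated trees; the comparison map is then a surjection between two free operads on the same generating module $M_{\leq n}$, hence an isomorphism, and the differentials agree by construction of $\varphi_n$. Taking the colimit of the tower yields an isomorphism $\scr{F} \cong \lk(\gen{M_0}, \gen{M_1}, \ldots)$, which gives the desired equivalence, in fact an isomorphism.

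I expect the main obstacle to be precisely this injectivity and identification step, namely verifying that the pushout defining $\lu(\gen{M_n}, \lk(\gen{M_0},\ldots,\gen{M_{n-1}}))$ introduces no extra relations or spurious homology beyond the prescribed differential, so that linking free operads really does return the free operad on the combined generators with exactly the intended $\partial$. This is the one place where the internal structure of $\ch$ and of the defining pushout must be controlled, and where one must be careful with the suspension conventions relating $M_d$ to $N_d = s^{-1}M_d$; once that is secured, the remainder is bookkeeping with degrees and the colimit.
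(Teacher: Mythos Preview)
Your proposal is correct and follows essentially the same approach as the paper: split $\Gen(\scr{F})$ by homological degree, form the link $\lk(\gen{M_0},\gen{M_1},\ldots)$, and identify it with $\scr{F}$ using that a link of free operads is free on the union of the generators (as in the $A_\infty$ example) with differential matching by construction. You have supplied considerably more detail than the paper---the filtration argument, the universal-property comparison, and the injectivity check you flag---but the underlying idea is the same.
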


\begin{proof}
Take $\Gen(\scr{F})$ and split it into $G_0,G_1,...$ by degree.  Then the operad given by $ \lk(\gen{G_1},\gen{G_2},...) $ will be isomorphic to $\scr{F}$ as vector space operads because they are both free on the same generators.  The chain maps are the same by construction because we can choose the differentials to be anything that maps to lower degree.
\end{proof}

\newpage

\bibliographystyle{plain}

\bibliography{refs}

\end{document}